\documentclass[11pt]{amsart}

\usepackage{amsmath,amssymb,amsfonts}
\usepackage[colorlinks=true,linkcolor=blue,citecolor=blue,urlcolor=blue]{hyperref}

\newtheorem{thm}{Theorem}[section]

\newtheorem{lemma}[thm]{Lemma}
\theoremstyle{definition}
\newtheorem{defn}[thm]{Definition}
\theoremstyle{remark}
\newtheorem{rem}[thm]{Remark}
\numberwithin{equation}{section}

\begin{document}

\title{Integrality of Averages of Roots of Unity and Perfect Isometries}
\thanks{Accepted for publication in the \emph{Bulletin of the Australian Mathematical Society}.}

\author{Chatchawan Panraksa}
\address{Science Division, Mahidol University International College\newline
Nakhon Pathom, Thailand}
\email{chatchawan.pan@mahidol.ac.th}

\author{Pornrat Ruengrot}
\address{Science Division, Mahidol University International College\newline
Nakhon Pathom, Thailand}
\email{pornrat.rue@mahidol.ac.th}

\date{}

\begin{abstract}

We establish a criterion for the integrality of averages of roots of unity and apply it to settle a conjecture regarding the linearity of functions on $\mathbb{Z}_n$. Specifically, we prove that for any modulus $n \ge 1$, if a function $f: \mathbb{Z}_n \to \mathbb{Z}_n$ satisfies that the averages $\frac{1}{n} \sum_{x=0}^{n-1} \omega^{f(x)+bx}$ (where $\omega=e^{2\pi i/n}$) are algebraic integers for all $b \in \mathbb{Z}_n$, then $f$ is necessarily linear modulo $n$. This provides a short, elementary proof that works uniformly for all $n$ and avoids the finite-field machinery used in previous partial results. Furthermore, when $n=p^r$, we utilize a local-global integrality argument to show that any normalized sum of $p^r$-th roots of unity that is $p$-adically integral must be either $0$ or a single root of unity. As an application, we completely characterize the perfect isometries of the cyclic group $C_{p^r}$: they are precisely those induced by affine permutations $x \mapsto \alpha x + \beta$ with $\gcd(\alpha, p^r)=1$.

\end{abstract}

\subjclass[2020]{Primary 11R18; Secondary 20C20}
\keywords{algebraic integers, roots of unity, modular arithmetic, cyclotomic fields, perfect isometries}

\maketitle

\section{Introduction}
Let $n\ge1$, write $\mathbb{Z}_n=\{0,1,\dots,n-1\}$, and fix a primitive $n$th root of unity $\omega=e^{2\pi i/n}$. For a function $f:\mathbb{Z}_n\longrightarrow\mathbb{Z}_n$ and a parameter $b\in\mathbb{Z}_n$, we consider the average
\begin{equation}\label{eq:muab}
	\mu_{b}(f)\;:=\;\frac1n\sum_{x=0}^{n-1}\omega^{\,f(x)+bx}.
\end{equation}
Linear functions $x\mapsto \alpha x+\beta$ clearly make $\mu_{b}(f)$ either $0$ or an $n$th root of unity, hence an algebraic integer. It was shown in \cite{panraksa2017note} that when $n$ is prime, if $\mu_{b}(f)$ is an algebraic integer for all $b\in\mathbb{Z}_n$, then $f$ is representable by a linear polynomial modulo $n$. The proof there used the theory of permutation polynomials over finite fields and a result of Stothers \cite{stothers1990permutation}.

In this paper, we give a short, elementary argument that works for all $n$ without using finite-field machinery and we use it to obtain consequences in two directions. First, for every modulus $n\ge1$, we prove that if the averages $\mu_b(f)$ are algebraic integers for all $b\in\mathbb Z_n$, then $f$ must be linear modulo $n$ (Theorem~\ref{thm:alln}). This settles the conjecture posed in \cite{panraksa2017note} and gives a short, elementary proof that avoids permutation-polynomial machinery, working uniformly for all $n$.

Second, specializing to $n=p^r$ and using a local--global integrality argument at the unique prime above $p$, we show that any normalized sum of $p^r$-th roots of unity that is $p$-adically integral must be either $0$ or a single $p^r$-th root of unity (Theorem~\ref{thm:allpr}). As an application, we completely determine the perfect isometries of the cyclic group $C_{p^r}$. They are precisely those induced by affine permutations $x\mapsto \alpha x+\beta$ with $\gcd(\alpha,p^r)=1$ (Theorem~\ref{thm:perfect}).

These results illustrate how a basic fact about roots-of-unity averages, combined with a minimal amount of cyclotomic number theory, yields clean structural theorems for functions on $\mathbb Z_n$ and for character-theoretic isometries.

\section{Averages of Roots of Unity: A Criterion}\label{sec:criterion}
We record the well-known and very useful criterion for when an average of roots of unity is an algebraic integer; see \cite[Lemma 4]{panraksa2017note}.

\begin{lemma}\label{lem:criterion}
	Let $\omega_1,\ldots,\omega_n$ be complex roots of unity and put
	\[
		\mu=\frac1n\sum_{j=1}^n\omega_j.
	\]
	Then $\mu$ is an algebraic integer if and only if either $\sum_{j=1}^n\omega_j=0$ or $\omega_1=\cdots=\omega_n$.
\end{lemma}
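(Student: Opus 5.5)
The plan is to use Galois conjugates and the triangle inequality. The implication ``if'' is immediate: if $\sum_j\omega_j=0$ then $\mu=0$, and if all $\omega_j$ are equal to some $\omega_1$ then $\mu=\omega_1$ is a root of unity. In both cases $\mu$ is an algebraic integer.

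For ``only if'', I will assume $\mu$ is an algebraic integer and $\mu\neq 0$, and deduce that all the $\omega_j$ coincide. Choose $m$ so that every $\omega_j$ is an $m$th root of unity; then $\mu\in\mathbb{Q}(\zeta_m)$, which is Galois over $\mathbb{Q}$ with abelian Galois group $G$. For any $\sigma\in G$, the conjugate $\sigma(\mu)=\frac1n\sum_j\sigma(\omega_j)$ is again an average of $n$ roots of unity. The triangle inequality therefore gives $|\sigma(\mu)|\le 1$ for every $\sigma$.

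Next I consider the norm $N=\prod_{\sigma\in G}\sigma(\mu)$. This is a rational number because it is $G$-invariant. It is an algebraic integer because each factor is one, so $N\in\mathbb{Z}$. It is nonzero because $\mu\neq0$ and each $\sigma$ is injective. Hence $|N|\ge 1$. On the other hand every factor satisfies $|\sigma(\mu)|\le 1$, so $|N|\le 1$. It follows that $|\sigma(\mu)|=1$ for all $\sigma$, and in particular, taking $\sigma$ to be the identity, $|\mu|=1$.

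The last step is the equality case of the triangle inequality, $\bigl|\sum_j\omega_j\bigr|=n=\sum_j|\omega_j|$. Equality forces all the $\omega_j$ to be nonnegative real multiples of a common unit vector. Since each $\omega_j$ has modulus $1$, they are all equal. The only point needing care is the norm argument: $\mu$ must be nonzero so that $N\ne0$, and $N$ must be a rational integer. Both follow from standard facts, so I expect no real obstacle.
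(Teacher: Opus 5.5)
Your proof is correct and follows essentially the same route as the paper: bound every Galois conjugate of $\mu$ by $1$ using the triangle inequality, and use that the norm of a nonzero algebraic integer is a nonzero rational integer. The only difference is direction: the paper argues contrapositively (not all equal $\Rightarrow |\mu|<1 \Rightarrow$ the norm is $0$), whereas you argue directly ($\mu\neq 0 \Rightarrow |\mu|=1$) and finish with the equality case of the triangle inequality.
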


\begin{proof}
	The ``if'' direction is immediate. For the converse, if the $\omega_j$ are not all equal then the triangle inequality gives $|\mu|<1$, and similarly each Galois conjugate $\mu'$ satisfies $|\mu'|\le1$ with at least one strict inequality. The product of all conjugates of $\mu$ is an algebraic integer of absolute value $<1$, hence $0$, so $\mu$ itself must be $0$.
\end{proof}

\section{Main result}\label{sec:main}
The main result of this note is the following theorem. This settles \cite[Conjecture 9]{panraksa2017note}.
\begin{thm}\label{thm:alln}
	Let $n\ge 1$, $\omega=e^{2\pi i/n}$, and $f:\mathbb Z_n\to\mathbb Z_n$.
	If for every $b\in\mathbb Z_n$ the average
	\[
		\mu_{b}(f)=\frac1n\sum_{x=0}^{n-1}\omega^{\,f(x)+bx}
	\]
	is an algebraic integer, then there exist $\alpha, \beta \in \mathbb{Z}_n$ such that
	$f(x)\equiv \alpha x+\beta \pmod n$ for all $x\in\mathbb Z_n$.
\end{thm}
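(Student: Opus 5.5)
The plan is to combine Lemma~\ref{lem:criterion} with a Parseval-type identity for the averages $\mu_b(f)$. Lemma~\ref{lem:criterion} forces each $|\mu_b(f)|$ to be $0$ or $1$. Parseval says the squares $|\mu_b(f)|^2$ sum to exactly $1$. So exactly one $b$ gives a nonzero average, and for that $b$ all the summands coincide, which is precisely linearity of $f$.

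\emph{Step 1 (dichotomy).} Fix $b\in\mathbb Z_n$. The average $\mu_b(f)$ is the average of the $n$ roots of unity $\omega^{f(x)+bx}$, $x\in\mathbb Z_n$, and it is an algebraic integer by hypothesis. By Lemma~\ref{lem:criterion}, either
\begin{itemize}
\item[(a)] $\sum_x \omega^{f(x)+bx}=0$, so $\mu_b(f)=0$; or
\item[(b)] all the $\omega^{f(x)+bx}$ are equal, so $\mu_b(f)$ is that common root of unity and $|\mu_b(f)|=1$.
\end{itemize}
In either case $|\mu_b(f)|^2\in\{0,1\}$.

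\emph{Step 2 (Parseval).} I compute $\sum_{b\in\mathbb Z_n}|\mu_b(f)|^2$ directly:
\[
\sum_{b}|\mu_b(f)|^2=\frac1{n^2}\sum_{x,y\in\mathbb Z_n}\omega^{f(x)-f(y)}\sum_{b\in\mathbb Z_n}\omega^{b(x-y)}.
\]
The inner sum is $n$ when $x=y$ and $0$ otherwise, by orthogonality of the characters of $\mathbb Z_n$. Hence the total is $\frac{1}{n^2}\cdot n\cdot n=1$. One should check that $\omega^{f(x)+bx}$ is well defined for residues mod $n$; it is, since $\omega^n=1$.

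\emph{Step 3 (conclusion).} By Steps 1 and 2, $\sum_b |\mu_b(f)|^2=1$ with each term in $\{0,1\}$. So there is exactly one $b_0$ with $|\mu_{b_0}(f)|=1$, and for it we are in case (b). Thus $\omega^{f(x)+b_0x}=\omega^{c}$ for all $x$ and some fixed $c$. Since $\omega$ is a primitive $n$th root of unity, this gives $f(x)+b_0x\equiv c\pmod n$. Therefore $f(x)\equiv \alpha x+\beta$ with $\alpha=-b_0$ and $\beta=c$.

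There is no real obstacle here. The only point needing care is Step 2, namely that the orthogonality relation is applied over the full group $\mathbb Z_n$ so the identity holds for all $n$. Everything else is a direct application of Lemma~\ref{lem:criterion}.
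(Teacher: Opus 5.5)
Your proof is correct. It differs from the paper's at the one substantive step, which is ruling out that every $\mu_b(f)$ vanishes.

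The paper uses a first-moment identity. It sums the unnormalized sums over $b$ to get $\sum_b\sum_x\omega^{f(x)+bx}=n\,\omega^{f(0)}\neq 0$, which is Fourier inversion at the single point $x=0$. Hence some $b_0$ must fall in the ``all terms equal'' case of Lemma~\ref{lem:criterion}.

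You use the second-moment (Plancherel) identity $\sum_b|\mu_b(f)|^2=1$ instead. For the existence of $b_0$ you only need this sum to be positive. Combined with $|\mu_b(f)|\in\{0,1\}$, it also shows that exactly one $b$ gives a nonzero average, i.e.\ the ``Fourier transform'' of $x\mapsto\omega^{f(x)}$ is supported at a single point.

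The paper's route is a line shorter and never needs absolute values. Yours gives the sharper structural statement for free. That uniqueness also follows immediately once linearity is known: if $f(x)+bx$ and $f(x)+b'x$ are both constant, then $(b-b')\cdot 1\equiv 0\pmod n$.

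Both arguments work uniformly in $n$ for the same reason, namely orthogonality of characters over the full group $\mathbb{Z}_n$.
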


\begin{proof}
	For each $b\in\mathbb{Z}_n$, Lemma~\ref{lem:criterion} applied to the multiset $\{\omega^{\,f(x)+bx}:x\in\mathbb{Z}_n\}$ says that: either the sum $\sum_x\omega^{\,f(x)+bx}$ vanishes or all terms are equal, i.e.\ $f(x)+bx$ is constant modulo $n$.

	Suppose that \emph{no} $b$ makes $f(x)+bx$ constant modulo $n$. Then for every $b$,
	\[
		\sum_{x=0}^{n-1}\omega^{\,f(x)+bx}=0.
	\]
	Summing these equalities over all $b\in\mathbb{Z}_n$ and reversing the order of summation gives
	\[
		\sum_{b=0}^{n-1}\sum_{x=0}^{n-1}\omega^{\,f(x)+bx}
		=\sum_{x=0}^{n-1}\omega^{\,f(x)}\!\sum_{b=0}^{n-1}\omega^{\,bx}
		= n\,\omega^{\,f(0)}\neq 0,
	\]
	which is a contradiction. Hence, there exists $b_0$ and $c$ with $f(x)+b_0x\equiv c$ for all $x$. Thus, $f(x)\equiv \alpha x+\beta \pmod n$ as required.
\end{proof}

\begin{rem}[Comparison with the prime case]
	In the prime case $n=p$, the proof in \cite[Theorem 7]{panraksa2017note} deduced linearity by showing that for every $b$ either $f(x)+bx$ is constant or the map $x\mapsto f(x)+bx$ is a permutation, and then invoking a result of Stothers on permutation polynomials to force $\deg f\le1$. The argument above avoids that detour entirely and works for all $n$ as well. 
\end{rem}

\section{The case \texorpdfstring{$n=p^r$}{n=p to the r} and the localizations}
\label{sec:localization}

In this section we recast the integrality condition $\,\mu_b(f)\in \mathcal{O}_F\,$ in $p$-adic
terms when $n=p^r$. Set
\[
	\omega=\zeta_{p^r}=e^{2\pi i/p^r},\qquad F=\mathbb{Q}(\omega),\qquad \mathcal{O}_F=\mathbb{Z}[\omega].
\]
For a nonzero prime ideal $\mathfrak{q}\subset \mathcal{O}_F$, the \emph{localization} at $\mathfrak{q}$ is
\[
	(\mathcal{O}_F)_{\mathfrak{q}} \;=\; \left\{\frac{a}{b}\;:\; a,b\in \mathcal{O}_F,\; b\notin \mathfrak{q}\right\} = \{x\in F: v_{\mathfrak{q}}(x)\ge 0\},
\]
where $v_{\mathfrak{q}}$ is the discrete valuation associated to $\mathfrak{q}$. We will
use that $\mathcal{O}_F$ is a Dedekind domain and satisfies the local--global identity
\begin{equation}\label{eq:lg}
	\mathcal{O}_F=\bigcap_{\mathfrak{q}\ne 0}(\mathcal{O}_F)_{\mathfrak{q}}\subset F,
\end{equation}
with the intersection taken over all nonzero prime ideals $\mathfrak{q}\subset \mathcal{O}_F$.

\begin{lemma}\label{lem:total-ram}
	In $F=\mathbb{Q}(\zeta_{p^r})$ the rational prime $p$ is totally ramified. More precisely,
	\[
		(p)=\bigl(1-\omega\bigr)^{\varphi(p^r)}\qquad\bigl(\,\varphi(p^r)=p^{\,r-1}(p-1)\,\bigr).
	\]
	In particular, there is a unique prime ideal $\mathfrak{p}$ of $\mathcal{O}_F$ above $(p)$, namely
	$\mathfrak{p}=(1-\omega)$, and $v_{\mathfrak{p}}(p)=\varphi(p^r)$. Moreover,
	\[
		F_{\mathfrak{p}}\cong \mathbb{Q}_p(\omega),\qquad \mathcal{O}_{F_{\mathfrak{p}}}\cong \mathbb{Z}_p[\omega],
	\]
	and $1-\omega$ is a uniformizer of $\mathcal{O}_{F_{\mathfrak{p}}}$.
\end{lemma}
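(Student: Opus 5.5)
The plan is to use the cyclotomic polynomial $\Phi_{p^r}(X)=\sum_{k=0}^{p-1}X^{kp^{r-1}}$, whose roots are exactly the primitive $p^r$-th roots of unity $\omega^a$ with $\gcd(a,p)=1$. Evaluating at $X=1$ gives
\[
p=\Phi_{p^r}(1)=\prod_{\gcd(a,p)=1,\,1\le a<p^r}(1-\omega^a).
\]
For each such $a$, the quotient $u_a=(1-\omega^a)/(1-\omega)=1+\omega+\dots+\omega^{a-1}$ lies in $\mathcal{O}_F$. Choosing $c$ with $ac\equiv1\pmod{p^r}$, the inverse $(1-\omega)/(1-\omega^a)=(1-(\omega^a)^c)/(1-\omega^a)$ also lies in $\mathcal{O}_F$. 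Hence $u_a$ is a unit, and $(p)=(1-\omega)^{\varphi(p^r)}$ as ideals.

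Next I prove that $(1-\omega)$ is prime. Since $[F:\mathbb{Q}]=\varphi(p^r)$ (irreducibility of $\Phi_{p^r}$, e.g.\ by Eisenstein applied to $\Phi_{p^r}(X+1)$), the fundamental identity $\sum_i e_if_i=\varphi(p^r)$ applies to the factorization of $(p)$. Any prime $\mathfrak{q}\mid(1-\omega)$ lies over $p$, so each prime over $p$ appears with ramification index at least $\varphi(p^r)$. This forces a single prime $\mathfrak{p}$ with $e=\varphi(p^r)$ and $f=1$. Since every prime dividing $(1-\omega)$ lies over $p$, we get $(1-\omega)=\mathfrak{p}^k$ with $k\varphi(p^r)=\varphi(p^r)$, so $k=1$ and $\mathfrak{p}=(1-\omega)$. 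This also gives $v_{\mathfrak{p}}(p)=\varphi(p^r)$.

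For the local statements, $F_{\mathfrak{p}}$ is the completion of $F$ at the unique prime above $p$. It is therefore generated over $\mathbb{Q}_p$ by $\omega$, giving $F_{\mathfrak{p}}\cong\mathbb{Q}_p(\omega)$, and $[F_{\mathfrak{p}}:\mathbb{Q}_p]=ef=\varphi(p^r)$. Since $1-\omega$ generates $\mathfrak{p}$, it is a uniformizer. The ring of integers of a totally ramified extension is $\mathbb{Z}_p[\pi]$ for any uniformizer $\pi$ (Eisenstein polynomial / standard lemma). Applying this with $\pi=1-\omega$ gives $\mathbb{Z}_p[1-\omega]=\mathbb{Z}_p[\omega]$.

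I expect the main obstacle to be the unit argument and the degree count; everything else is standard theory of Dedekind domains and local fields.
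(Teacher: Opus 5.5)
Your proof is correct. The paper gives no proof of this lemma; it records these facts as standard cyclotomic theory. What you have written is the standard textbook argument:
\begin{itemize}
\item factor $p=\Phi_{p^r}(1)$ into the elements $1-\omega^a$;
\item show the ratios $(1-\omega^a)/(1-\omega)$ are units;
\item use $\sum e_if_i=[F:\mathbb{Q}]$ to force a single prime with $e=\varphi(p^r)$ and $f=1$;
\item pass to the completion, where a totally ramified extension has ring of integers $\mathbb{Z}_p[\pi]$.
\end{itemize}

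A few small points are worth tightening.

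\emph{The counting step.} The direction you actually need is that every prime over $p$ divides $(1-\omega)$. This holds because a prime dividing $(1-\omega)^{\varphi(p^r)}$ divides $(1-\omega)$. Cleanest is to write $(1-\omega)=\prod\mathfrak{q}_i^{k_i}$, so $e_i=k_i\varphi(p^r)$ directly. You should also note that $(1-\omega)$ is a proper ideal; otherwise $(p)$ would be the unit ideal and $1/p$ would be an algebraic integer.

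\emph{Eisenstein is unnecessary.} The same count gives $[F:\mathbb{Q}]=\sum e_if_i\ge\varphi(p^r)$. The fact that $\omega$ is a root of $\Phi_{p^r}$ gives $[F:\mathbb{Q}]\le\varphi(p^r)$. So irreducibility of $\Phi_{p^r}$ comes out of the argument for free.

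\emph{The identification $F_{\mathfrak{p}}\cong\mathbb{Q}_p(\omega)$.} Your ``therefore'' deserves one line. The subfield $\mathbb{Q}_p(\omega)\subseteq F_{\mathfrak{p}}$ is finite over $\mathbb{Q}_p$, hence complete and closed. It contains the dense subfield $F$, so it is all of $F_{\mathfrak{p}}$.

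\emph{An alternative for the local ring of integers.} Instead of citing the general lemma, you can observe that $\Phi_{p^r}(1-X)$ is Eisenstein at $p$. This gives $\mathcal{O}_{F_{\mathfrak{p}}}=\mathbb{Z}_p[1-\omega]=\mathbb{Z}_p[\omega]$ directly.
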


\noindent(These facts are standard in cyclotomic field theory; we record them to fix notation.)

\medskip

We now relate $p$-adic integrality at $\mathfrak{p}$ to global integrality in $\mathcal{O}_F$.

\begin{thm}\label{thm:local-global}
	Let $p$ be a prime and $r\ge 1$. With notation as above, let $S\in \mathcal{O}_F$ and set $\mu=S/p^r\in F$.
	Let $\mathfrak{p}=(1-\omega)$ be the unique prime of $\mathcal{O}_F$ above $(p)$. If (under the natural embedding
	$F\hookrightarrow F_{\mathfrak{p}}\cong \mathbb{Q}_p(\omega)$) we have
	\[
		\mu\in \mathcal{O}_{F_{\mathfrak{p}}}\cong \mathbb{Z}_p[\omega],
	\]
	then $\mu\in \mathcal{O}_F$. In particular, $\mu$ is an algebraic integer of $F$.
\end{thm}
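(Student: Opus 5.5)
The plan is to apply the local--global identity \eqref{eq:lg}: it suffices to show $v_{\mathfrak q}(\mu)\ge 0$ for every nonzero prime $\mathfrak q\subset\mathcal O_F$. I will split the primes into two cases, $\mathfrak q=\mathfrak p$ and $\mathfrak q\ne\mathfrak p$. By Lemma~\ref{lem:total-ram}, $\mathfrak p=(1-\omega)$ is the only prime above $p$.

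\emph{Primes $\mathfrak q\neq\mathfrak p$.} Such a $\mathfrak q$ lies over a rational prime $\ell\neq p$, so $v_{\mathfrak q}(p)=0$ and hence $v_{\mathfrak q}(p^r)=0$. Since $S\in\mathcal O_F$, we have $v_{\mathfrak q}(S)\ge 0$. Therefore
\[
v_{\mathfrak q}(\mu)=v_{\mathfrak q}(S)-r\,v_{\mathfrak q}(p)=v_{\mathfrak q}(S)\ge 0,
\]
that is, $\mu\in(\mathcal O_F)_{\mathfrak q}$.

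\emph{The prime $\mathfrak p$.} Here I use the hypothesis. The valuation $v_{\mathfrak p}$ on $F$ is the restriction of the normalized valuation on the completion $F_{\mathfrak p}\cong\mathbb Q_p(\omega)$, and $\mathcal O_{F_{\mathfrak p}}$ is exactly the set of elements of nonnegative valuation. So $\mu\in\mathcal O_{F_{\mathfrak p}}$ under the embedding gives $v_{\mathfrak p}(\mu)\ge 0$. In other words, $\mu\in \mathcal O_{F_{\mathfrak p}}\cap F=(\mathcal O_F)_{\mathfrak p}$.

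Combining the two cases, $\mu$ lies in every localization, so \eqref{eq:lg} gives $\mu\in\mathcal O_F$. Hence $\mu$ is an algebraic integer.

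The only step needing care is the identification $\mathcal O_{F_{\mathfrak p}}\cap F=(\mathcal O_F)_{\mathfrak p}$. This holds because the completion's valuation extends $v_{\mathfrak p}$, which is standard from the construction of the completion and is implicit in Lemma~\ref{lem:total-ram}. Everything else is bookkeeping with valuations.
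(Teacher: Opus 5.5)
Your proposal is correct and follows the paper's proof essentially step for step. You reduce to the local--global identity \eqref{eq:lg}, dispose of the primes $\mathfrak q\neq\mathfrak p$ via $v_{\mathfrak q}(p^r)=0$, and handle $\mathfrak p$ through the identification $(\mathcal O_F)_{\mathfrak p}=\mathcal O_{F_{\mathfrak p}}\cap F$.
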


\begin{proof}
	By \eqref{eq:lg}, it suffices to show $\mu\in (\mathcal{O}_F)_{\mathfrak{q}}$ for every nonzero prime
	$\mathfrak{q}\subset \mathcal{O}_F$.

	\emph{Case 1: $\mathfrak{q}\nmid \mathfrak{p}$.} In the discrete valuation ring $(\mathcal{O}_F)_{\mathfrak{q}}$ we have $v_{\mathfrak{q}}(p)=0$,
	so $p^r$ is a unit. Since $S\in \mathcal{O}_F\subset (\mathcal{O}_F)_{\mathfrak{q}}$, it follows that
	\[
		v_{\mathfrak{q}}(\mu)=v_{\mathfrak{q}}(S)-v_{\mathfrak{q}}(p^r)\ge 0-0=0,
	\]
	hence $\mu\in (\mathcal{O}_F)_{\mathfrak{q}}$.

	\emph{Case 2: $\mathfrak{q}=\mathfrak{p}$.} By hypothesis, $\mu\in \mathcal{O}_{F_{\mathfrak{p}}}$. Since
	$(\mathcal{O}_F)_{\mathfrak{p}}=\mathcal{O}_{F_{\mathfrak{p}}}\cap F$ (as subrings of $F_{\mathfrak{p}}$), we conclude
	$\mu\in (\mathcal{O}_F)_{\mathfrak{p}}$.

	Therefore, $\mu\in (\mathcal{O}_F)_{\mathfrak{q}}$ for all nonzero primes $\mathfrak{q}$, and \eqref{eq:lg} yields
	$\mu\in \mathcal{O}_F$.
\end{proof}

Note that the form of $S$ is irrelevant for Theorem~\ref{thm:local-global}; only $S\in\mathcal{O}_F$ matters. When $S$ is a sum of roots of unity, we obtain the following result.

\begin{thm}\label{thm:allpr}
	Let $p$ be a prime and $r\ge 1$. Write $\omega=\zeta_{p^r}$, $F=\mathbb{Q}(\omega)$, $\mathcal{O}_F=\mathbb{Z}[\omega]$.
	Let
	\[
		S=\sum_{j=0}^{p^r-1} m_j\,\omega^j,\qquad m_j\in\mathbb{Z}_{\ge 0},\qquad \sum_{j=0}^{p^r-1} m_j=p^r.
	\]
	If $\dfrac{S}{p^r}\in \mathcal{O}_{F_{\mathfrak{p}}}\cong \mathbb{Z}_p[\omega]$, then either $S=0$ or
	$S=p^r\,\omega^{j_0}$ for some $0\le j_0<p^r$.
\end{thm}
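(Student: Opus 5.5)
The plan is to chain Theorem~\ref{thm:local-global} with Lemma~\ref{lem:criterion}.

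First, note that $S=\sum_j m_j\omega^j$ lies in $\mathbb{Z}[\omega]=\mathcal{O}_F$, since each $m_j$ is a nonnegative integer. The hypothesis says $\mu=S/p^r$ lies in $\mathcal{O}_{F_{\mathfrak{p}}}$ under the embedding $F\hookrightarrow F_{\mathfrak{p}}$. Theorem~\ref{thm:local-global} therefore applies directly and gives $\mu\in\mathcal{O}_F$, so $\mu$ is an algebraic integer.

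Second, rewrite $\mu$ as an average of exactly $p^r$ roots of unity. Form the list $\omega_1,\dots,\omega_{p^r}$ in which each power $\omega^j$ appears $m_j$ times. This list has length $\sum_j m_j=p^r$, and
\[
\mu=\frac{1}{p^r}\sum_{k=1}^{p^r}\omega_k .
\]
Lemma~\ref{lem:criterion} then says that either $\sum_k\omega_k=S=0$, or all the $\omega_k$ are equal. In the second case every occurring power is the same $\omega^{j_0}$. That means $m_{j_0}=p^r$ and $m_j=0$ for $j\ne j_0$, so $S=p^r\omega^{j_0}$.

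The only point needing care is the exact count $\sum m_j=p^r$, which makes the denominator match the number of terms so that Lemma~\ref{lem:criterion} applies verbatim. Since the embedding is injective, nothing is lost passing between $F$ and $F_{\mathfrak{p}}$. I do not expect a real obstacle: the substantive work is already done in Theorem~\ref{thm:local-global}.
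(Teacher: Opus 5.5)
Your proposal is correct and matches the paper's proof: Theorem~\ref{thm:local-global} upgrades the $\mathfrak{p}$-adic integrality of $S/p^r$ to membership in $\mathcal{O}_F$, and Lemma~\ref{lem:criterion} applied to the multiset of $p^r$ roots of unity (with each $\omega^j$ repeated $m_j$ times) gives the dichotomy. Your explicit checks that $S\in\mathbb{Z}[\omega]$ and that $\sum_j m_j=p^r$ are steps the paper leaves implicit.
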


\begin{proof}
	By Theorem~\ref{thm:local-global}, the hypothesis implies $S/p^r\in \mathcal{O}_F$. Applying the roots-of-unity average criterion (Lemma~\ref{lem:criterion}) to the multiset of terms, we see that the average $S/p^r$ is an algebraic integer only in the two cases stated.
\end{proof}

\section{Application to Perfect Isometries of Cyclic Groups of Prime Power Order}

We now apply the integrality criterion from the previous section to a problem in
character theory: determining the \emph{perfect isometries} of a cyclic group of order $p^r$.

Let $n=p^r$ and consider the cyclic group $G = C_n = \langle g \rangle$.
The $n$ irreducible complex characters of $G$ are
\[
	\mathrm{Irr}(G) = \{\chi_0,\chi_1,\ldots,\chi_{n-1}\}, \qquad
	\chi_j(g^a) = \omega^{ja},\ \ \omega = e^{2\pi i/n}.
\]

Brou\'e's original definition of a perfect isometry~\cite{broue1990isometries}
is given with respect to a $p$-modular system $(K,\mathcal{O},k)$
in which $\mathcal{O}$ is a complete discrete valuation ring whose maximal
ideal contains $p$, the field of fractions $K$ has characteristic~0 and
contains all character values of~$G$, and $k$ is the residue field of
characteristic~$p$.
For $G=C_{p^r}$ all character values lie in $F=\mathbb{Q}(\omega)$
with $\omega=\zeta_{p^r}$, and $p$ is totally ramified in~$F$.
On the $p$-adic side we have $K\cong\mathbb{Q}_p(\omega)$ and
$\mathcal{O}\cong\mathbb{Z}_p[\omega]$, a complete DVR with uniformizer
$1-\omega$ and residue field $\mathbb{F}_p$.
Since $G$ is abelian, $|C_G(g)|=|G|=p^r$ for every~$g$.
Hence Brou\'e's integrality condition
\[
	\frac{\mu(g,h)}{|C_G(g)|},\;
	\frac{\mu(g,h)}{|C_G(h)|}\in\mathcal{O}
\]
reduces precisely to
\[
	\frac{\mu(g,h)}{p^r}\in\mathbb{Z}_p[\omega]\quad\text{for all }g,h\in G.
\]
We therefore adopt $\mathcal{O}=\mathbb{Z}_p[\omega]$ throughout.

\begin{defn}[Perfect isometry for $C_{p^r}$]
	Let $n=p^r$, $G=C_n=\langle g\rangle$, and $\omega=e^{2\pi i/n}$.
	A bijection $I:\mathrm{Irr}(G)\to\mathrm{Irr}(G)$ is a
	\emph{perfect isometry} if the associated function
	\[
		\mu_I(g^a,g^b)=\sum_{j=0}^{n-1} I(\chi_j)(g^a)\,\chi_j(g^b)
	\]
	satisfies the following conditions:
	\begin{itemize}
		\item[(i)] (\emph{integrality}) $\displaystyle \frac{\mu_I(g^a,g^b)}{n}\in
			      \mathbb{Z}_p[\omega]$ for all $a,b\in\mathbb{Z}_n$;
		\item[(ii)] (\emph{separation}) if $\mu_I(g^a,g^b)\neq 0$, then $g^a$ and
		      $g^b$ are either both the identity or both non-identity elements.
	\end{itemize}
\end{defn}

\begin{rem}
	Intuitively, condition~(i) asserts that the normalized sums appearing in $\mu_I$
	are \emph{$p$-adically integral}, while condition~(ii) ensures that the nonzero values of
	$\mu_I$ respect the partition of $G$ into $\{1\}$ and $G\setminus\{1\}$.
\end{rem}

Every such bijection $I$ is determined by a permutation $f$ of $\mathbb{Z}_n$
such that $I(\chi_j) = \chi_{f(j)}$.  In this notation,
\[
	\mu_I(g^a,g^b) = \sum_{j=0}^{n-1} \omega^{a f(j) + bj}.
\]

We can now describe all permutations $f$ giving rise to perfect isometries.

\begin{thm}\label{thm:perfect}
	Let $n=p^r$.  A bijection $f : \mathbb{Z}_n \to \mathbb{Z}_n$ gives a perfect
	isometry of the group $C_n$ if and only if there exist $\alpha,\beta \in \mathbb{Z}_n$
	with $\gcd(\alpha,n)=1$ such that
	\[
		f(x) \equiv \alpha x + \beta \pmod n \quad \text{for all } x\in\mathbb{Z}_n.
	\]
\end{thm}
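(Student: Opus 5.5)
The plan is to prove the two directions separately, with the forward direction reduced to Theorem~\ref{thm:allpr} and then to the argument of Theorem~\ref{thm:alln}.

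\emph{Sufficiency.} Suppose $f(x)\equiv\alpha x+\beta$ with $\gcd(\alpha,n)=1$. Then
\[
\mu_I(g^a,g^b)=\omega^{a\beta}\sum_{j}\omega^{(a\alpha+b)j},
\]
which equals $n\,\omega^{a\beta}$ if $a\alpha+b\equiv0\pmod n$, and $0$ otherwise. Integrality~(i) is therefore immediate.

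For separation~(ii), a nonzero value forces $b\equiv-a\alpha$. Since $\alpha$ is a unit, $b\equiv0$ if and only if $a\equiv0$. So $g^a$ and $g^b$ are both the identity or both non-identity.

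\emph{Necessity, first reduction.} Let $f$ be a bijection giving a perfect isometry. Fix $a=1$ and let $b$ range over $\mathbb Z_n$. Then $\mu_I(g,g^b)=\sum_j\omega^{f(j)+bj}$ is a sum of $n=p^r$ roots of unity of the form in Theorem~\ref{thm:allpr}, with nonnegative multiplicities summing to $p^r$. By condition~(i), $\mu_I(g,g^b)/p^r\in\mathbb Z_p[\omega]$.

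Theorem~\ref{thm:allpr} (via Theorem~\ref{thm:local-global}) then shows that $\mu_b(f)=\mu_I(g,g^b)/n$ is an algebraic integer for every $b$. Theorem~\ref{thm:alln} now applies directly and gives $f(x)\equiv\alpha x+\beta\pmod n$.

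\emph{Necessity, the unit condition.} It remains to show $\gcd(\alpha,n)=1$; I expect this to be the only real obstacle, and it is mild. The first route is bijectivity: if $p\mid\alpha$, then $x\mapsto\alpha x+\beta$ takes only $n/p<n$ values modulo $n$. This contradicts that $f$ is a bijection, since bijectivity is part of the hypothesis.

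Separation~(ii) gives an alternative route. With $a=1$ and $b\equiv-\alpha$, the computation above gives $\mu_I(g,g^{-\alpha})=n\,\omega^{\beta}\neq0$. If $p\mid\alpha$ and $r\ge1$, then $g^{-\alpha}$ could be the identity only when $\alpha\equiv0$. That case is already excluded by bijectivity, which also covers $r\ge1$ generally. Either way $\alpha$ is a unit, completing the proof.
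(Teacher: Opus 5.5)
Your proof is correct and follows the paper's argument essentially verbatim. You fix $a=1$, pass from $p$-adic to global integrality (the paper cites Theorem~\ref{thm:local-global} directly rather than Theorem~\ref{thm:allpr}, which amounts to the same thing), invoke Theorem~\ref{thm:alln}, obtain $\gcd(\alpha,n)=1$ from bijectivity, and verify sufficiency by the same geometric-sum computation.

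Your alternative route via separation is muddled as written. With $a=1$ it only excludes $\alpha\equiv 0$, not general $p\mid\alpha$; for that you would need $a=p^{r-1}$, so that $a\alpha\equiv 0$ gives a nonzero value at $(g^a,1)$, violating separation. It is superfluous anyway, since bijectivity already settles the unit condition.
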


\begin{proof}
	$(\Rightarrow)$\ Suppose $f$ yields a perfect isometry.  By condition~(i),
	for every $a,b\in\mathbb{Z}_n$,
	\[
		\frac{1}{n}\sum_{j=0}^{n-1}\omega^{a f(j)+bj} \in \mathbb{Z}_p[\omega].
	\]
	Fix $a=1$.  Then for all $b$,
	\[
		\frac{1}{n}\sum_{j=0}^{n-1}\omega^{f(j)+bj} \in \mathbb{Z}_p[\omega].
	\]
	By Theorem~\ref{thm:local-global} this average is an algebraic integer, so Theorem~\ref{thm:alln} applies and gives $\alpha,\beta\in\mathbb{Z}_n$ with
	\[
		f(j)\equiv \alpha j + \beta \pmod n \quad\text{for all }j.
	\]
	Since $f$ is bijective, multiplication by $\alpha$ must be invertible modulo~$n$;
	hence $\gcd(\alpha,n)=1$.

	$(\Leftarrow)$\ Conversely, assume $f(x)\equiv \alpha x+\beta$ with
	$\gcd(\alpha,n)=1$.  Then
	\[
		\mu_I(g^a,g^b)
		= \sum_{j=0}^{n-1}\omega^{a(\alpha j+\beta)+bj}
		= \omega^{a\beta}\sum_{j=0}^{n-1}\omega^{(a\alpha+b)j}.
	\]
	The inner sum equals $n$ if $a\alpha+b\equiv0\pmod n$ and $0$ otherwise, so
	\[
		\frac{\mu_I(g^a,g^b)}{n}
		= \begin{cases}
			\omega^{a\beta}\in \mathbb{Z}_p[\omega], & b\equiv -a\alpha \pmod n, \\[4pt]
			0,                                       & \text{otherwise}.
		\end{cases}
	\]
	Thus condition~(i) holds.  For~(ii), if $\mu_I(g^a,g^b)\ne0$ then
	$b\equiv -a\alpha\pmod n$, and because $\alpha$ is a unit, we have
	$a=0$ if and only if $b=0$.  Hence $g^a$ and $g^b$ are simultaneously
	the identity or both non-identity elements, satisfying the separation property.
\end{proof}

\section*{Acknowledgement}
The authors thank the anonymous referee for valuable comments.

\end{document}